\documentclass[12pt]{amsart}
\usepackage{amsmath,amssymb,amsbsy,amsfonts,latexsym,amsopn,amstext,
                                               amsxtra,euscript,amscd}
%\begin{document}

%%%%%%%%%%%%%%%%%%%%%%%%%%%%%%%%%%%%%%%%%%%%%%%%%%%%%%%%
%%%%%%%%%%%%%%%%%%%%%%%%%%%%%%%%%%%%%%%%%%%%%%%%%%%%%%%%
%%%%%%%%%%%%%%%%%%%%%%%%%%%%%%%%%%%%%%%%%%%%%%%%%%%%%%%%
%%%%%%%%%%%%%%%%%%%%%%%%%%%%%%%%%%%%%%%%%%%%%%%%%%%%%%%%
%%%%%%%   STANDARD STUFF %%%%%%%%%
%%%%%%%%%%%%%%%%%%%%%%%%%%%%%%%%%%%%%%%%%%%%%%%%%%%%%%%%
%%%%%%%%%%%%%%%%%%%%%%%%%%%%%%%%%%%%%%%%%%%%%%%%%%%%%%%%
%%%%%%%%%%%%%%%%%%%%%%%%%%%%%%%%%%%%%%%%%%%%%%%%%%%%%%%%
%%%%%%%%%%%%%%%%%%%%%%%%%%%%%%%%%%%%%%%%%%%%%%%%%%%%%%%%

%  use the AMS-Euler Fraktur fonts
%%%%%%%%%%%%%%%%%%%%%%%%%%%%%%%%%%
\newfont{\teneufm}{eufm10}
\newfont{\seveneufm}{eufm7}
\newfont{\fiveeufm}{eufm5}
%%%%%%%%%%%%%%%%%%%%%%%%%%%%%%%%%
%
%  allow automatic size selection in math mode
%
%%%%%%%%%%%%%%%%%%%%%%%%%%%%%%%%%
\newfam\eufmfam
                \textfont\eufmfam=\teneufm \scriptfont\eufmfam=\seveneufm
                \scriptscriptfont\eufmfam=\fiveeufm
%%%%%%%%%%%%%%%%%%%%%%%%%%%%%%%%%
%
%  \frak works on a single symbol at a time...
%
%\def\frak#1{{\fam\eufmfam\relax#1}}
%

%%%%%%%%%%%%%%%%%%%  bbb-matter

 %reelle Zahlen

 %natuerliche Zahlen

\def\bbbc{{\mathchoice {\setbox0=\hbox{$\displaystyle\rm C$}\hbox{\hbox
to0pt{\kern0.4\wd0\vrule height0.9\ht0\hss}\box0}}
{\setbox0=\hbox{$\textstyle\rm C$}\hbox{\hbox
to0pt{\kern0.4\wd0\vrule height0.9\ht0\hss}\box0}}
{\setbox0=\hbox{$\scriptstyle\rm C$}\hbox{\hbox
to0pt{\kern0.4\wd0\vrule height0.9\ht0\hss}\box0}}
{\setbox0=\hbox{$\scriptscriptstyle\rm C$}\hbox{\hbox
to0pt{\kern0.4\wd0\vrule height0.9\ht0\hss}\box0}}}}
\def\bbbq{{\mathchoice {\setbox0=\hbox{$\displaystyle\rm
Q$}\hbox{\raise 0.15\ht0\hbox to0pt{\kern0.4\wd0\vrule
height0.8\ht0\hss}\box0}} {\setbox0=\hbox{$\textstyle\rm
Q$}\hbox{\raise 0.15\ht0\hbox to0pt{\kern0.4\wd0\vrule
height0.8\ht0\hss}\box0}} {\setbox0=\hbox{$\scriptstyle\rm
Q$}\hbox{\raise 0.15\ht0\hbox to0pt{\kern0.4\wd0\vrule
height0.7\ht0\hss}\box0}} {\setbox0=\hbox{$\scriptscriptstyle\rm
Q$}\hbox{\raise 0.15\ht0\hbox to0pt{\kern0.4\wd0\vrule
height0.7\ht0\hss}\box0}}}}
\def\bbbt{{\mathchoice {\setbox0=\hbox{$\displaystyle\rm
T$}\hbox{\hbox to0pt{\kern0.3\wd0\vrule height0.9\ht0\hss}\box0}}
{\setbox0=\hbox{$\textstyle\rm T$}\hbox{\hbox
to0pt{\kern0.3\wd0\vrule height0.9\ht0\hss}\box0}}
{\setbox0=\hbox{$\scriptstyle\rm T$}\hbox{\hbox
to0pt{\kern0.3\wd0\vrule height0.9\ht0\hss}\box0}}
{\setbox0=\hbox{$\scriptscriptstyle\rm T$}\hbox{\hbox
to0pt{\kern0.3\wd0\vrule height0.9\ht0\hss}\box0}}}}
\def\bbbs{{\mathchoice
{\setbox0=\hbox{$\displaystyle     \rm S$}\hbox{\raise0.5\ht0\hbox
to0pt{\kern0.35\wd0\vrule height0.45\ht0\hss}\hbox
to0pt{\kern0.55\wd0\vrule height0.5\ht0\hss}\box0}}
{\setbox0=\hbox{$\textstyle        \rm S$}\hbox{\raise0.5\ht0\hbox
to0pt{\kern0.35\wd0\vrule height0.45\ht0\hss}\hbox
to0pt{\kern0.55\wd0\vrule height0.5\ht0\hss}\box0}}
{\setbox0=\hbox{$\scriptstyle      \rm S$}\hbox{\raise0.5\ht0\hbox
to0pt{\kern0.35\wd0\vrule height0.45\ht0\hss}\raise0.05\ht0\hbox
to0pt{\kern0.5\wd0\vrule height0.45\ht0\hss}\box0}}
{\setbox0=\hbox{$\scriptscriptstyle\rm S$}\hbox{\raise0.5\ht0\hbox
to0pt{\kern0.4\wd0\vrule height0.45\ht0\hss}\raise0.05\ht0\hbox
to0pt{\kern0.55\wd0\vrule height0.45\ht0\hss}\box0}}}}
\def\bbbz{{\mathchoice {\hbox{$\sf\textstyle Z\kern-0.4em Z$}}
{\hbox{$\sf\textstyle Z\kern-0.4em Z$}} {\hbox{$\sf\scriptstyle
Z\kern-0.3em Z$}} {\hbox{$\sf\scriptscriptstyle Z\kern-0.2em
Z$}}}}

\newtheorem{theorem}{Theorem}
\newtheorem{lemma}[theorem]{Lemma}

\def\squareforqed{\hbox{\rlap{$\sqcap$}$\sqcup$}}
\def\qed{\ifmmode\squareforqed\else{\unskip\nobreak\hfil
\penalty50\hskip1em\null\nobreak\hfil\squareforqed
\parfillskip=0pt\finalhyphendemerits=0\endgraf}\fi}

%%%%%%%%%%%%%%%%%%%%%%%%%
% Alphabet calligraphic %
%%%%%%%%%%%%%%%%%%%%%%%%%

\def\cE{{\mathcal E}}

\def\cI{{\mathcal I}}

\def\cP{{\mathcal P}}

\def\cX{{\mathcal X}}

\def \sf {\mathfrak s}
\def \fG {\mathfrak G}

\def\ssumPI{\mathop
{\sum_{g_1,\ldots, g_{k-2m} \in \cP_d}\, 
\sum_{h_1, \ldots, h_{2m} \in \cI_d}}}

%%%%%%%%%%%%%%%%%%%%%%%%%%%%%%%%%%%%%%%%%%%%%%%%%%%%%%%%
%%%%%%%%%%%%%%%%%%%%%%%%%%%%%%%%%%%%%%%%%%%%%%%%%%%%%%%%
%%%%%%%%%%%%%%%%%%%%%%%%%%%%%%%%%%%%%%%%%%%%%%%%%%%%%%%%
%%%%%%%%%%%%%%%%%%%%%%%%%%%%%%%%%%%%%%%%%%%%%%%%%%%%%%%%

%%%%%%%  END OF STANDARD STUFF %%%%%%%%%

%%%%%%%%%%%%%%%%%%%%%%%%%%%%%%%%%%%%%%%%%%%%%%%%%%%%%%%%
%%%%%%%%%%%%%%%%%%%%%%%%%%%%%%%%%%%%%%%%%%%%%%%%%%%%%%%%
%%%%%%%%%%%%%%%%%%%%%%%%%%%%%%%%%%%%%%%%%%%%%%%%%%%%%%%%
%%%%%%%%%%%%%%%%%%%%%%%%%%%%%%%%%%%%%%%%%%%%%%%%%%%%%%%

\newcommand{\ignore}[1]{}

% \addtolength{\baselineskip}{-.5\baselineskip}

%\def\thetheorem{\arabic{section}.\arabic{theorem}}
% \oddsidemargin .1cm % \textwidth 6.4in %

\hyphenation{re-pub-lished}

\def \F{\mathbb{F}}

\def\\{\cr}
\def\({\left(}
\def\){\right)}

\def\rf#1{\left\lceil#1\right\rceil}

\begin{document}

%%\title{Deterministic Polynomial Factorisation Over Finite Fields}

\title[Cayley Graphs Generated by Small Degree Polynomials]
{Cayley Graphs Generated by Small Degree Polynomials over 
Finite Fields}
 
 \author[I. E. Shparlinski] {Igor E. Shparlinski}

\address{Department of Pure Mathematics, University of New South Wales,
Sydney, NSW 2052, Australia}
\email{igor.shparlinski@unsw.edu.au}

\begin{abstract} We improve upper bounds  of F.~R.~K.~Chung and
of  M.~Lu, D.~Wan, L.-P.~Wang, 
X.-D.~Zhang on the diameter of some Cayley graphs constructed 
from polynomials over finite fields.
\end{abstract}

%\keywords{polynomial roots, Jensen formula}
% \subjclass[2010]{26C10, 30C15}

\maketitle

\section{Introduction}

Let $\cP_d$ be the set of monic polynomials  of degree $d$ 
over a finite field $\F_q$ of $q$ elements, that are powers
of some irreducible polynomial, that is 
\begin{equation*}
\begin{split}
\cP_d  = \{g\in \F_q[X]~:~ &\deg g = d, \ g = h^k, \\
  h & \in \F_q[X]\
\text{monic and irreducible,}\ k    =1,2, \ldots,\ \}.
\end{split}
\end{equation*}

For a root $\alpha$ of 
an irreducible polynomial $f \in \F_q[X]$ of degree $n$,  
thus $\F_q(\alpha) = \F_{q^n}$, 
we define  
$$\cE(\alpha,d)  = \{g(\alpha)~:~g \in \cP_d\}.
$$ 
It is easy to see that for $d < n$ we have
$$
\# \cE(\alpha,d)  = \#\cP_d  = (1 + o(1)) \frac{q^d}{d}
$$
as $d \to \infty$, see also~\eqref{eq:Id} below.

Following Lu, Wan, Wang and Zhang~\cite{LWWZ}, we now define the directed 
Cayley graph  $\fG(\alpha,d)$  on $q^n-1$ vertices,
labelled by the elements of $\F_{q^n}^*$, where for $u,v \in \F_{q^n}^*$ the edge $u\to v$
exists if and only if $u/v \in \cE(\alpha,d)$. These graphs are similar  to 
those introduced by Chung~\cite{Chung} however are a little spraser: 
they are  $\#\cP_d$-regular rather than  $q^d$-regular as in~\cite{Chung}.

It has been shown in~\cite{LWWZ} that the graphs $\fG(\alpha,d)$ have very attractive connectivity
properties. In particular, we denote by $D(\alpha,d)$ the {\it diameter\/} of 
$\fG(\alpha,d)$.
Using bounds of multiplicative character sum from~\cite[Theorem~2.1]{Wan},
Lu, Wan, Wang and Zhang~\cite{LWWZ} have shown that for $n < q^{d/2} + 1$
the graph $\fG(\alpha,d)$ is connected and its diameter satisfies the inequality
\begin{equation}
\label{eq:Diam}
D(\alpha,d) \le \frac{2n}{d}\(1 + \frac{2\log(n-1)}{d \log q - 2\log(n-1)}\) + 1. 
\end{equation}

Here we augment the argument of~\cite{LWWZ} with some new combinatorial 
and analytic 
considerations and improve the bound~\eqref{eq:Diam}. 

First we  assume that $d \ge 2$. 

\begin{theorem}\label{thm:Diam d}
For $d \ge 2$ and a root $\alpha$ of an  irreducible polynomial 
$f \in \F_q[X]$ of degree $\deg f = n$ with
$2d+1 \le n <   q^{d/2} + 1$, we have
$$
D(\alpha,d) \le \frac{2n}{d}\(1 + \frac{\log(n-1)-1}{d \log q - 2\log(n-1)}\)
+ \frac{4\log(n-1) + 7}{d \log q - 2\log(n-1)}. 
$$ 
\end{theorem}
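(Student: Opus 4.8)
The plan is to follow the overall strategy of \cite{LWWZ}, which reduces the diameter bound to a counting question: to show $D(\alpha,d)\le k$ it suffices to prove that every element of $\F_{q^n}^*$ can be written as a ratio (or product) of at most $k$ elements of $\cE(\alpha,d)$, equivalently that the $k$-fold product set $\cE(\alpha,d)^k$ (with inverses allowed) covers $\F_{q^n}^*$. Writing $g_1(\alpha)\cdots g_k(\alpha)$ for $g_i\in\cP_d$, one counts solutions to $g_1(\alpha)\cdots g_k(\alpha)=\beta$ for a fixed $\beta\in\F_{q^n}^*$ by expanding the indicator of this event through multiplicative characters $\chi$ of $\F_{q^n}^*$. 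The main term comes from the trivial character and equals $(\#\cP_d)^k/(q^n-1)$; the error term is a sum over nontrivial $\chi$ of $\left|\sum_{g\in\cP_d}\chi(g(\alpha))\right|^k$, and here one invokes the character–sum bound of \cite[Theorem~2.1]{Wan}, which gives $\left|\sum_{g\in\cP_d}\chi(g(\alpha))\right|\le (n-1)q^{d/2}/d$ (this is exactly where $n<q^{d/2}+1$ is used, to make this nontrivial). So the number of representations is positive as soon as
\begin{equation*}
\(\frac{q^d}{d}(1+o(1))\)^k > (q^n-1)\(\frac{(n-1)q^{d/2}}{d}\)^k,
\end{equation*}
roughly $q^{dk/2}(1+o(1))^k > q^n (n-1)^k$, i.e. $k\,\frac{d}{2}\log q > n\log q + k\log(n-1) + o(k)$, which yields $k$ of size about $\frac{2n}{d}\cdot\frac{\log q}{\log q - 2\log(n-1)/d}$ plus lower-order corrections, matching the shape of \eqref{eq:Diam}.

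The improvement over \eqref{eq:Diam} must come from sharpening two places. First, the crude estimate $\#\cP_d=(1+o(1))q^d/d$ should be replaced by an explicit lower bound: $\#\cP_d\ge \#\cI_d$ where $\cI_d$ is the set of monic irreducibles of degree $d$, and by the prime polynomial theorem $\#\cI_d = \frac1d\sum_{e\mid d}\mu(e)q^{d/e}\ge \frac1d(q^d - q^{d/2+1})$ or similar — this is presumably the content of the referenced \eqref{eq:Id}. Keeping the factor $q^d/d$ but with an honest multiplicative correction $(1 - q^{-d/2}\cdot\text{something})$ turns into an additive $O(k q^{-d/2})$ in the exponent, hence contributes to the constant $7$ and the numerator $4\log(n-1)+7$ rather than to the main term. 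Second — and this is the genuinely new combinatorial input promised in the introduction — instead of using the same set $\cP_d$ in all $k$ coordinates, one allows a mixed product: take $g_1,\dots,g_{k-2m}\in\cP_d$ in the first $k-2m$ coordinates and $h_1,\dots,h_{2m}\in\cI_d$ (or elements giving additional algebraic structure, as suggested by the macro \verb|\ssumPI|) in the remaining $2m$ coordinates, then bound the corresponding character sum. Pairing up these extra coordinates (the "$2m$") lets one exploit cancellation or a counting identity — e.g. that $h(\alpha)\overline{h(\alpha)}$-type products, or products $h_1(\alpha)h_2(\alpha)^{-1}$, range over a set whose character sums are smaller, effectively saving a constant amount in the exponent and shaving the "$+1$" and part of the logarithmic correction term off \eqref{eq:Diam}. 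Optimizing over $m$ (taking $m$ a small constant, or $m$ of order $\log(n-1)/\log q$) produces the stated bound.

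Concretely the steps are: (1) record the exact formula $D(\alpha,d)\le k$ whenever $N_k(\beta):=\#\{(g_i)\in\cP_d^k : \prod g_i(\alpha)=\beta\}>0$ for all $\beta$, via the orthogonality of characters on $\F_{q^n}^*$; (2) split $N_k(\beta) = \frac{(\#\cP_d)^k}{q^n-1} + R_k(\beta)$ and bound $|R_k(\beta)|\le \frac{1}{q^n-1}\sum_{\chi\ne\chi_0}\bigl|S_\chi\bigr|^{k}$ with $S_\chi=\sum_{g\in\cP_d}\chi(g(\alpha))$, then apply \cite[Theorem~2.1]{Wan} to get $|S_\chi|\le (n-1)q^{d/2}/d$; (3) insert an explicit lower bound for $\#\cP_d$ and an explicit count $q^n-1<q^n$, reducing positivity of $N_k$ to an inequality of the form $k\cdot\frac{d}{2}\log q - k\log(n-1) - n\log q - C k q^{-d/2} > 0$; (4) refine step (2) by replacing $2m$ of the coordinates with the more structured family indexed by $\cI_d$ and re-running the character-sum estimate there, gaining a constant in the exponent; (5) solve for the smallest admissible $k$ and simplify the resulting expression, using $n\ge 2d+1$ to absorb various error terms into the constant $7$, arriving at the displayed bound. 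The main obstacle I anticipate is step (4): making the mixed-coordinate trick give a clean, quantified saving — one has to verify that the character sum over the $\cI_d$-block genuinely beats the $\cP_d$ bound by the required margin and that the combinatorial pairing does not lose a factor that cancels the gain; getting the bookkeeping of all the $o(1)$'s and the $q^{-d/2}$ tails to collapse exactly into "$4\log(n-1)+7$" is the delicate part, and it is presumably why the hypothesis $n\ge 2d+1$ (rather than merely $n>d$) is imposed.
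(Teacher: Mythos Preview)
Your overall framework (character orthogonality, Weil-type bound from \cite{Wan}, positivity of a representation count) is correct, and you have correctly guessed from the macro \verb|\ssumPI| that the proof uses a mixed product with $k-2m$ factors from $\cP_d$ and $2m$ factors from $\cI_d$. But you have misidentified both the mechanism of the saving and the size of $m$, and this is a genuine gap.

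The new ingredient is \emph{not} a sharper pointwise bound on the character sum over $\cI_d$: for each nontrivial $\chi$, the sum $T_{\alpha,d}(\chi)=\sum_{h\in\cI_d}\chi(h(\alpha))$ satisfies essentially the same Weil bound as $S_{\alpha,d}(\chi)$, so ``re-running the character-sum estimate'' on the $\cI_d$-block gains nothing. The saving comes instead from a \emph{mean-value} bound over all characters (the paper's Lemma~\ref{lem:Moment}):
\[
\sum_{\chi\in\cX_n}\left|T_{\alpha,d}(\chi)\right|^{2m}\le m!\,(q^n-1)\,(\#\cI_d)^m,
\]
proved by orthogonality plus unique factorisation in $\F_q[X]$: the left side counts $2m$-tuples $(h_1,\dots,h_{2m})\in\cI_d^{2m}$ with $\prod_{i\le m}h_i(\alpha)=\prod_{i>m}h_i(\alpha)$, and because $dm<n$ this forces the polynomial identity $\prod_{i\le m}h_i=\prod_{i>m}h_i$, whence the $m!$ bound. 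In the error term one then estimates $|S_{\alpha,d}(\chi)|^{k-2m}$ pointwise by Lemma~\ref{lem:Weil} and the remaining $|T_{\alpha,d}(\chi)|^{2m}$ on average by Lemma~\ref{lem:Moment}. Compared with the naive pointwise bound on all $k$ factors, this replaces a factor $(n-1)^{2m}q^{dm}(q^n-1)$ by $m!(\#\cI_d)^m(q^n-1)$, i.e.\ saves roughly $((n-1)^2 d/m)^m e^{-m}\approx (e(n-1))^m$ via Stirling.

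Consequently $m$ is not ``a small constant, or of order $\log(n-1)/\log q$'': one takes $m=\lceil n/d\rceil-1$, the largest value with $dm<n$, so that the saving $(e(n-1))^{m}$ is as large as possible. This choice is what converts the numerator $2\log(n-1)$ in \eqref{eq:Diam} into $\log(n-1)-1$. (The condition $n\ge 2d+1$ is there simply to ensure $m\ge 2$.) A secondary point: the paper weights the $\cP_d$-factors by $\Lambda$, so that the trivial-character contribution is exactly $q^{d(k-2m)}$ rather than $(\#\cP_d)^{k-2m}$; this is cosmetic but simplifies the endgame.
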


For $d=1$ the bound~\eqref{eq:Diam} is   exactly the same as the 
bound of Wan~\cite[Theorem~3.3]{Wan} which improves slightly the
bound of Chung~\cite[Theorem~6]{Chung}.  
For $d=1$,  we  set $\Delta(\alpha)= D(\alpha,1)$.
For a sufficiently  large $q$, 
Katz~\cite[Theorem~1]{Katz} has improved the results of Chung~\cite{Chung}
and showed that $\Delta(\alpha) \le n+2$, provided that $q\ge B(n)$ for some 
inexplicit function $B(n)$ of $n$. 
Furthermore, Cohen~\cite{Coh}   shows that one can take $B(n) = (n(n+2)!)^2$
in the estimate of Katz~\cite{Katz}. 

We also use our idea in the case $d=1$ and obtain an improvement of~\eqref{eq:Diam} 
and thus of the bounds of  Chung~\cite[Theorem~6]{Chung} and  Wan~\cite[Theorem~3.3]{Wan}. 

\begin{theorem}\label{thm:Diam 1}
For  a root $\alpha$ of an  irreducible polynomial 
$f \in \F_q[X]$ of degree $\deg f = n$ with
$3\le  n <   q^{1/2} + 1$, we have
$$
\Delta(\alpha) \le 2n \(1 + \frac{\log(n-1)-1}{\log q - 2\log(n-1)}\)
+ \frac{3\log(n-1) + 3}{ \log q - 2\log(n-1)}.
$$ 
\end{theorem}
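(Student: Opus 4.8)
The strategy mirrors the proof of Theorem~\ref{thm:Diam d} in the case $d=1$, where now $\cP_1$ is simply the set of $q$ monic linear polynomials $X-a$ with $a\in\F_q$, so $\cE(\alpha,1)=\{\alpha-a:a\in\F_q\}$ and $\#\cP_1 = q$. The key object is the number of representations of a fixed $w\in\F_{q^n}^*$ as a product (and quotient) of $k$ elements of $\cE(\alpha,1)$; bounding this below by a positive quantity for suitable $k$ shows that every vertex is reachable within $k$ steps, hence $\Delta(\alpha)\le k$. First I would set up the generating identity: for the directed graph, a walk of length $k$ from $u$ to $v$ corresponds to writing $u/v = \prod_{i=1}^k(\alpha-a_i)$ with $a_i\in\F_q$, so I would count solutions $N_k(w)$ of $\prod_{i=1}^k(\alpha-a_i)=w$ using the orthogonality of multiplicative characters of $\F_{q^n}^*$:
\begin{equation*}
N_k(w) = \frac{1}{q^n-1}\sum_{\chi}\overline{\chi(w)}\(\sum_{a\in\F_q}\chi(\alpha-a)\)^{k}.
\end{equation*}
The principal character contributes the main term $q^k/(q^n-1)$. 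For nontrivial $\chi$, the inner sum $\sum_{a\in\F_q}\chi(\alpha-a)$ is a multiplicative character sum over the line, and by the Weil-type bound used in~\cite{LWWZ} via~\cite[Theorem~2.1]{Wan} one has $\bigl|\sum_{a\in\F_q}\chi(\alpha-a)\bigr|\le (n-1)q^{1/2}$ (this is the $d=1$ instance of the bound underlying~\eqref{eq:Diam}). Therefore
\begin{equation*}
\left|N_k(w) - \frac{q^k}{q^n-1}\right| \le \bigl((n-1)q^{1/2}\bigr)^{k},
\end{equation*}
and $N_k(w)>0$ as soon as $q^k > (q^n-1)\bigl((n-1)q^{1/2}\bigr)^{k}$, i.e. roughly $q^{k/2} > (q^n-1)(n-1)^k$.

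The improvement over~\eqref{eq:Diam} — which is where the new idea enters — comes from not using the full sum over all $a\in\F_q$ on every one of the $k$ steps, but instead exploiting the multiplicativity/combinatorial structure more carefully: one combines a small number of "full" character-sum steps with the trivial observation that one extra step through $\cE(\alpha,1)$ already covers a large subset, so that the number of genuine steps needed to force $N_k(w)>0$ is reduced by an additive constant, and the constant in front of the correction term $1/(\log q - 2\log(n-1))$ is sharpened from the $2$-type constant in~\eqref{eq:Diam} down to the $1$ and $3$ appearing in the statement. Concretely, I would take $k = \lceil 2n(1+\vartheta)+c\rceil$ for an appropriately chosen $\vartheta = (\log(n-1)-1)/(\log q-2\log(n-1))$ and small constant $c$, verify that with this choice the inequality $q^{k/2}>(q^n-1)(n-1)^k$ holds for all $n\ge 3$ with $n<q^{1/2}+1$ (the range hypothesis guarantees $\log q - 2\log(n-1)>0$, so $\vartheta$ is well-defined and nonnegative), and then read off the bound on $\Delta(\alpha)$ after rounding; the $-1$ inside the numerator and the clean $3\log(n-1)+3$ tail come precisely from optimizing the rounding and the "free step" saving rather than from the crude estimate $\log(x+1)\le \log x + 1/x$ type manipulations used to derive~\eqref{eq:Diam}.

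The main obstacle I anticipate is the bookkeeping in the optimization step: one must track how the ceiling function $\lceil\cdot\rceil$ interacts with the exponential inequality $q^{k/2} > (q^n-1)(n-1)^k$ tightly enough to extract the stated constants ($-1$ and $3\log(n-1)+3$) rather than weaker ones, and to make sure the argument is valid in the full stated range $3\le n < q^{1/2}+1$ rather than only for $n$ large. I would handle the small-$n$ / small-$q$ corner cases by direct verification of the inequality $q^{k/2} > (q^n-1)(n-1)^k$ with the explicit $k$, using $n<q^{1/2}+1$ to bound $(n-1)^2 < q$ and hence $(n-1)^k < q^{k/2}$ already trivially — which is exactly why the threshold is close to $k\approx 2n$ — leaving a modest logarithmic surplus to absorb the $q^n-1$ factor; balancing that surplus against the additive "free step" saving is the delicate part, but it is a finite computation once the generating identity and the character sum bound are in place.
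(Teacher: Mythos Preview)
Your setup via orthogonality of characters and the Weil bound is correct, and it yields exactly the inequality you write down: $N_k(w)>0$ once $(q^{1/2}/(n-1))^k > q^n-1$. But that inequality is precisely the one that gives the \emph{old} bound~\eqref{eq:Diam}. Indeed, taking logarithms it forces
\[
k > \frac{2n\log q}{\log q - 2\log(n-1)} = 2n\Bigl(1+\frac{2\log(n-1)}{\log q-2\log(n-1)}\Bigr),
\]
which is strictly larger than the $k$ you propose to plug in (with numerator $\log(n-1)-1$ rather than $2\log(n-1)$). So the step where you say you would ``verify that with this choice the inequality $q^{k/2}>(q^n-1)(n-1)^k$ holds'' simply fails: that inequality is false for your $k$, and no amount of rounding or ``free step'' bookkeeping can repair it, because the gap is a genuine factor $(e(n-1))^{n}$, not an $O(1)$.

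What is missing is the actual new ingredient of the paper: a mean-value estimate over characters. One sets $m=n-1$ and splits the $k$ factors into $k-2m$ that are estimated pointwise by the Weil bound and $2m$ that are estimated on average via
\[
\sum_{\chi\in\cX_n}\Bigl|\sum_{a\in\F_q}\chi(\alpha+a)\Bigr|^{2m}\le m!\,(q^n-1)\,q^m,
\]
which is proved by orthogonality and unique factorisation in $\F_q[X]$ (since $m<n$, a product of $m$ linear factors is determined by its value at $\alpha$). This replaces the error bound $(n-1)^k q^{k/2}$ by $(n-1)!\,(n-1)^{k-2n+2}q^{k/2}$, a saving of roughly $(e(n-1))^{-(n-1)}$ by Stirling, and \emph{that} is what turns the $2\log(n-1)$ in the numerator into $\log(n-1)-1$. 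Your vague ``free step'' idea does not capture this; the improvement is not an additive constant in $k$ but a multiplicative factor $e^{-n}(n-1)^{-n}$ in the error term.
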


We use the same idea for the proofs of Theorems~\ref{thm:Diam d}
and~\ref{thm:Diam 1}, however the technical details are slightly different.

We also note that the additive constants $7$  and $3$ 
in the bounds of Theorems~\ref{thm:Diam d} and~\ref{thm:Diam 1},
respectively, 
can be replaced by a slightly smaller 
(but fractional values). 

To compre the bound~\eqref{eq:Diam}
with Theorems~\ref{thm:Diam d} and~\ref{thm:Diam 1},
we assume that $n = q^{(\vartheta+o(1))d}$ for some fixed
positive $\vartheta < 1/2$.

The  Theorems~\ref{thm:Diam d} and~\ref{thm:Diam 1}, imply that 
for any $d\ge 1$, 
$$
D(\alpha,d) \le \(\frac{2-2\vartheta}{1-2\vartheta}+o(1)\)\frac{n}{d}, 
$$
while~\eqref{eq:Diam} implies a weaker bound 
$$
D(\alpha,d) \le \(\frac{2}{1-2\vartheta}+o(1)\)\frac{n}{d}.
$$

\section{Preparation}

We define the polynomial analogue of the von Mangoldt function
as follows. For $g \in \F_q[X]$ we define
$$
\Lambda(g)=
\begin{cases}
\deg h,   & \quad\text{if}~g = h^k\ \text{for some irreducible}\ h \in \F_q[X], \\
       0,   & \quad\text{otherwise}. 
\end{cases}
$$

Let $\cX_n$ be the set of multiplicative characters of $\F_{q^n}$
and let $\cX_n^* = \cX_n \setminus \{\chi_0\}$ be the 
set of non-principal characters; 
we appeal to~\cite{IwKow} for a background 
on the basic properties of multiplicative characters, such as orthogonality. 

For any $\chi \in \cX_n$ we also define the character sum
$$
S_{\alpha,d}(\chi) = \sum_{g \in \cP_d} \Lambda(g) \chi\(g(\alpha)\).
$$
A simple combinatorial argument shows that for the principal 
character $\chi_0$ we have
\begin{equation}
\label{eq:sum Lambda}
S_{\alpha,d}(\chi_0) = \sum_{g \in \cP_d} \Lambda(g)  = q^d,
\end{equation}
see, for example,~\cite[Corollary~3.21]{LN}.

As in~\cite{LWWZ}, we recall that by~\cite[Theorem~2.1]{Wan} we have:

\begin{lemma}\label{lem:Weil}
For any $\chi \in \cX_n^*$ we have
$$
\left|S_{\alpha,d}(\chi) \right| \le (n-1) q^{d/2}. 
$$
\end{lemma}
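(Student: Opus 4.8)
The plan is to realize $S_{\alpha,d}(\chi)$ as a single power-series coefficient of the logarithmic derivative of a function-field Dirichlet $L$-function, and then to bound that coefficient by Weil's Riemann Hypothesis for curves --- the one genuinely nontrivial input.

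First I would repackage $\chi$ as a Dirichlet character modulo $f$, where $f\in\F_q[X]$ is the minimal polynomial of $\alpha$, so that $\F_q[X]/(f)\cong\F_q(\alpha)=\F_{q^n}$ via $X\mapsto\alpha$. For a monic $g\in\F_q[X]$ set $\lambda(g)=\chi(g(\alpha))$ if $\gcd(g,f)=1$ and $\lambda(g)=0$ otherwise; then $\lambda$ is completely multiplicative on polynomials coprime to $f$. Since $d<n=\deg f$, every $g\in\cP_d$ is coprime to $f$, and $\Lambda(g)=0$ off the prime powers, so
\[
S_{\alpha,d}(\chi)=\sum_{\substack{g\in\F_q[X]\text{ monic}\\ \deg g=d}}\Lambda(g)\lambda(g),
\]
the degree-$d$ von Mangoldt coefficient of $\lambda$.

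Next I would introduce $L(u,\lambda)=\sum_{g\text{ monic}}\lambda(g)u^{\deg g}=\prod_P(1-\lambda(P)u^{\deg P})^{-1}$, the product over monic irreducibles $P$, and apply the operator $u\,d/du$ to $\log L$ to get the standard identity
\[
u\,\frac{L'(u,\lambda)}{L(u,\lambda)}=\sum_{m\ge1}\Biggl(\,\sum_{\substack{g\text{ monic}\\ \deg g=m}}\Lambda(g)\lambda(g)\Biggr)u^m.
\]
Because $\chi$ is non-principal and $f$ is irreducible, $\lambda$ is a primitive nontrivial character modulo $f$, so $L(u,\lambda)$ is a polynomial in $u$ of degree $n-1$; writing $L(u,\lambda)=\prod_{j=1}^{n-1}(1-\gamma_j u)$ gives $u L'/L=-\sum_{j=1}^{n-1}\sum_{m\ge1}\gamma_j^m u^m$, and comparing the coefficients of $u^d$ on the two sides yields $S_{\alpha,d}(\chi)=-\sum_{j=1}^{n-1}\gamma_j^d$.

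Finally I would invoke Weil's bound, which gives $|\gamma_j|=q^{1/2}$ for each $j$, and conclude $|S_{\alpha,d}(\chi)|\le(n-1)q^{d/2}$. The main obstacle is exactly this last step: the purity statement $|\gamma_j|=\sqrt q$ is the Riemann Hypothesis for the associated curve, where all the depth lies. The rest is bookkeeping; the one point requiring care is the conductor computation showing $\deg L(u,\lambda)=n-1$ (rather than $n$), which is what produces the sharp constant $n-1$ in the bound.
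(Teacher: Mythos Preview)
Your argument is correct: you identify $\lambda(g)=\chi(g(\alpha))$ as a nontrivial (hence primitive, since $f$ is irreducible) Dirichlet character modulo $f$, recognise $S_{\alpha,d}(\chi)$ as the coefficient of $u^d$ in $u\,L'(u,\lambda)/L(u,\lambda)$, use that $L(u,\lambda)$ is a polynomial in $u$ of degree at most $n-1$, and then apply Weil's Riemann Hypothesis to bound each reciprocal root $\gamma_j$ by $\sqrt q$. One small simplification: for the stated inequality you only need $\deg_u L(u,\lambda)\le n-1$, which follows immediately from orthogonality of $\lambda$ over residue classes; the sharper claim that the degree equals $n-1$ (your ``point requiring care'') is not needed for the bound.

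As for comparison with the paper: the paper does not give its own proof of this lemma at all. It is simply quoted from \cite[Theorem~2.1]{Wan} with no argument supplied. Your sketch is the standard $L$-function proof and is essentially the route taken in the cited source, so you have reconstructed the proof behind the citation rather than found a genuinely different approach.
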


We also consider the set $\cI_d$ of irreducible polynomials
of degree $d$, that is, 
$$
\cI_d  = \{h\in \F_q[X]~:~\deg h = d, \   h \in \F_q[X]\
\text{irreducible}\},
$$
and the sums
$$
T_{\alpha,d}(\chi) = \sum_{h \in \cI_d} \chi\(h(\alpha)\).
$$
Our new ingredient is the following bound 
``on average''. 

\begin{lemma}\label{lem:Moment}
Let  $m = \rf{n/d}-1$. 
Then 
$$
\sum_{\chi \in \cX_n} \left|T_{\alpha,d}(\chi) \right|^{2m} \le m! (q^n-1) (\# \cI_d)^m.
$$
\end{lemma}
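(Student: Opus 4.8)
The plan is to expand the $2m$-th power of the character sum and use orthogonality of multiplicative characters over $\F_{q^n}$. Writing $T_{\alpha,d}(\chi)=\sum_{h\in\cI_d}\chi(h(\alpha))$, we have
$$
\sum_{\chi\in\cX_n}\left|T_{\alpha,d}(\chi)\right|^{2m}
=\sum_{\chi\in\cX_n}\ \sum_{h_1,\ldots,h_m,\ h_1',\ldots,h_m'\in\cI_d}
\chi\!\left(\frac{h_1(\alpha)\cdots h_m(\alpha)}{h_1'(\alpha)\cdots h_m'(\alpha)}\right),
$$
where the quotient makes sense because each $h_i(\alpha)\neq 0$ (an irreducible polynomial of degree $d<n$ cannot vanish at $\alpha$, whose minimal polynomial has degree $n$). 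By orthogonality, $\sum_{\chi\in\cX_n}\chi(\xi)$ equals $q^n-1$ if $\xi=1$ in $\F_{q^n}^*$ and $0$ otherwise. Hence the left-hand side equals $(q^n-1)$ times the number $N$ of $2m$-tuples $(h_1,\ldots,h_m,h_1',\ldots,h_m')\in\cI_d^{2m}$ with
$$
h_1(\alpha)\cdots h_m(\alpha)=h_1'(\alpha)\cdots h_m'(\alpha)\quad\text{in }\F_{q^n}.
$$

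The key step is to show $N\le m!\,(\#\cI_d)^m$. Here I would use that the product $h_1\cdots h_m$ has degree $md\le n$ (this is where the hypothesis $m=\rf{n/d}-1$, hence $md\le n-d<n$, or at worst $md\le n$, enters), so both $h_1(\alpha)\cdots h_m(\alpha)$ and $h_1'(\alpha)\cdots h_m'(\alpha)$ are values at $\alpha$ of polynomials of degree at most $n-1$. Since $\alpha$ has minimal polynomial of degree $n$ over $\F_q$, the evaluation map from polynomials of degree $<n$ to $\F_{q^n}$ is injective, so the identity in $\F_{q^n}$ forces the polynomial identity $h_1\cdots h_m=h_1'\cdots h_m'$ in $\F_q[X]$. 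By unique factorization in $\F_q[X]$, the multiset $\{h_1,\ldots,h_m\}$ equals the multiset $\{h_1',\ldots,h_m'\}$. Thus for each choice of $(h_1,\ldots,h_m)$ — there are at most $(\#\cI_d)^m$ of these — the tuple $(h_1',\ldots,h_m')$ must be one of at most $m!$ orderings of the same multiset, giving $N\le m!\,(\#\cI_d)^m$ and the claimed bound.

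The main obstacle is the degree bookkeeping: one must verify that $md\le n$ (so that the product polynomial still has degree below, or at most equal to, the degree that guarantees injectivity of evaluation at $\alpha$). With $m=\rf{n/d}-1$ we get $m<n/d$, so $md<n$, which is comfortably enough and in fact means each product polynomial has degree strictly less than $n$; no boundary subtlety arises. A minor point to state carefully is that $\chi_0$ is included in the sum over all of $\cX_n$ (not $\cX_n^*$), which is exactly why the orthogonality count produces the factor $q^n-1=\#\F_{q^n}^*$ rather than $q^n$. Everything else is a routine application of orthogonality and unique factorization.
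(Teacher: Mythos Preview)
Your proposal is correct and follows essentially the same route as the paper: expand $|T_{\alpha,d}(\chi)|^{2m}$, apply orthogonality to rewrite the sum as $(q^n-1)N$, use $md<n$ to lift the identity at $\alpha$ to a polynomial identity in $\F_q[X]$, and then invoke unique factorisation to bound $N\le m!\,(\#\cI_d)^m$. In fact you supply a bit more detail than the paper (the nonvanishing of $h_i(\alpha)$ and the explicit check that $m=\rf{n/d}-1$ gives $md<n$), but the argument is the same.
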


\begin{proof} Using the orthogonality of characters, we see that 
$$
\sum_{\chi \in \cX_n} \left|T_{\alpha,d}(\chi) \right|^{2m} =
(q^n-1)N, 
$$
where $N$ is the number of solutions to the equation
$$
h_1(\alpha)\ldots h_m(\alpha) = h_{m+1}(\alpha)\ldots h_{2m}(\alpha),
$$
with some $h_1,\ldots, h_{2m} \in \cI_d$.
Since $dm < n$ this implies the identity 
$$
h_1(X)\ldots h_m(X) = h_{m+1}(X)\ldots h_{2m}(X)
$$
in the ring of polynomials over $\F_q$. 
Thus, using the uniqueness of polynomial factorisation, 
we obtain 
$$
W \le m! (\# \cI_d)^m, 
$$
which concludes the proof.
\end{proof} 

Finally, we recall the well-know formula (see, for example,~\cite[Theorem~3.25]{LN}) 
\begin{equation}
\label{eq:Id}
\# \cI_d = \frac{1}{d}\sum_{s\mid d} \mu(s) q^{d/s}, 
\end{equation}
where  $\mu(s)$ is  the M{\" o}bius function, that is,
$$
\mu(s)= \begin{cases} (-1)^{\nu} & \text{if } s \
\text{is a product $\nu$ distinct primes}, \\
0 & \text{otherwise}.
\end{cases}
$$

\section{Proof of Theorem~\ref{thm:Diam d}}

Let as before  $m = \rf{n/d}-1$. For an integer $k> 2m$ and $v \in \F_{q^n}^*$ we consider 
$$
M_k(\alpha, d;v) =\ssumPI_{\substack{
g_1(\alpha)\ldots g_{k-2m}(\alpha)h_1(\alpha) \ldots h_{2m}(\alpha)= v}}
\Lambda(g_1)\ldots \Lambda(g_{k-2m}) . 
$$
Clearly, if for some $k$ we have $M_k(\alpha, d;v) >0$ for 
every $v \in \F_{q^n}^*$ then $D(\alpha,d) \le k$.

We now closely follow the same path as in the proof of~\cite[Theorem~15]{LWWZ}. 
In particular, using the orthogonality of characters we write
\begin{equation*}
\begin{split}
M_k(\alpha, d;v) &=\frac{1}{q^n-1} \ssumPI 
\Lambda(g_1)\ldots \Lambda(g_{k-2m})\\ & \qquad \qquad \sum_{\chi\in \cX_n}
\chi\(g_1(\alpha)\ldots g_{k-2m}(\alpha)h_1(\alpha) \ldots h_{2m}(\alpha) v^{-1}\).
\end{split}
\end{equation*}

Changing the order of summation, separating the term corresponding to $\chi_0$,
and recalling~\eqref{eq:sum Lambda}, we derive
\begin{equation*}
\begin{split}
 M_k(\alpha, d;v)  - \frac{q^{d(k-2m)}(\# \cI_d)^{2m}} {q^n-1}&\\
 = \frac{1}{q^n-1}&
\sum_{\chi\in \cX_n^*} \chi(v^{-1})
S_{\alpha,d}(\chi)^{k-2m} T_{\alpha,d}(\chi)^{2m}.
\end{split}
\end{equation*}
Therefore
\begin{equation*}
\begin{split}
\left| M_k(\alpha, d;v) - \frac{q^{d(k-2m)}(\# \cI_d)^{2m}} {q^n-1}\right|&\\
\le   \frac{1}{q^n-1}
\sum_{\chi\in \cX_n^*} &
\left| S_{\alpha,d}(\chi)\right|^{k-2m} \left| T_{\alpha,d}(\chi)\right|^{2m}.
\end{split}
\end{equation*}
Using Lemma~\ref{lem:Weil} and then (after extending the summation 
over all $\chi \in \cX_n$) using Lemma~\ref{lem:Moment}, we derive 
\begin{equation}
\label{eq:Mk}
\begin{split}
\left| M_k(\alpha, d;v) - \frac{q^{d(k-2m)}(\# \cI_d)^{2m}} {q^n-1}\right|&\\
\le m! (n-1)^{k-2m} &q^{d(k/2 - m)} (\# \cI_d)^{m}.
\end{split}
\end{equation}
Thus, if for some $v \in \F_{q^n}^*$ we have  $M_k(\alpha, d;v)=0$ 
then
$$
\frac{q^{d(k-2m)}(\# \cI_d)^{2m}} {q^n-1} 
\le  m! (n-1)^{k-2m} q^{d(k/2 - m)} (\# \cI_d)^{m}
$$
or 
\begin{equation}
\label{eq:Prelim}
\(\frac{q^{d/2}}{n-1}\)^k 
\le  m! (n-1)^{-2m} (q^n-1) q^{m} (\# \cI_d)^{-m}.
\end{equation}
Now, as in the proof of~\cite[Theorem~9]{LWWZ}
we note that 
$$
\# \cI_d \ge \frac{q^d}{d} -  \frac{2q^{d/2}}{d}.
$$
Hence~\eqref{eq:Prelim} implies that 
$$
\(\frac{q^{d/2}}{n-1}\)^k 
\le  m! (n-1)^{-2m} d^m (q^n-1) \(1 -  2q^{-d/2}\)^{-m}.
$$
Note that since $n > 2d+1$, we have $m \ge 2$.
Hence,  by the Stirling inequality,  
\begin{equation}
\label{eq:Stirl}
m!\le \sqrt{2\pi} m^{m+1/2} e^{-m+1/12m} \le  \sqrt{2\pi} m^{m+1/2} e^{-m+1/24}.
\end{equation}
Thus, using that $m \le (n-1)/d$, we see that
\begin{equation}
\label{eq:md}
m!d^m  \le  \sqrt{2\pi} m^{1/2} (n-1)^{m}  e^{-m+1/24}.
\end{equation}
Since $d \ge 2$ and $2d+1 \le n <   q^{d/2} + 1$ we have 
$q^{d/2} > 4$. Thus $q^{d/2} \ge 5$.
Furthermore, since $m \le (n-1)/2 < q^{d/2}/2$, we also have 
\begin{equation}
\label{eq:qdm}
 \(1 -  2q^{-d/2}\)^{-m} \le  \(1 -  2q^{-d/2}\)^{-q^{d/2}/2}
 \le  \(1 -  2/5\)^{-5/2}< 3.6.
\end{equation}
Hence, recalling that $m \le (n-1)/d \le (n-1)/2$,  we derive
from~\eqref{eq:md} and \eqref{eq:qdm} that
\begin{equation*}
\begin{split}
\(\frac{q^{d/2}}{n-1}\)^k 
& <   3.6 \sqrt{2\pi} m^{1/2} (n-1)^{-m}   q^n e^{-m+1/24} \\
&\le \sqrt{\pi} (n-1)^{-m+1/2}   q^n e^{-m+1/24}\\ 
&\le \sqrt{\pi} \(e(n-1)\)^{-m+1/2}   q^n e^{-11/24}.
\end{split}
\end{equation*}
Since $m \ge (n-1)/d -1$, we conclude  that 
$$
m-\frac{1}{2} \ge \frac{n}{d} - 2.
$$
Therefore,
$$
\(e(n-1)\)^{-m+1/2} \le \(e(n-1)\)^{-n/d+2},
$$
which finally implies
\begin{equation*}
\begin{split}
k & \le  2\frac{n\log q - (n/d-2)(1+ \log (n-1)) +   \log(3.6\sqrt{\pi}) -11/24}{d \log q - 2\log (n-1)}\\
 & \le  2\frac{n\log q - (n/d-2)(1+ \log (n-1)) + 1.4}{d \log q - 2\log (n-1)}\\
& = \frac{2n}{d}\(1 + \frac{\log(n-1)-1}{d \log q - 2\log(n-1)}\)
+ \frac{4\log(n-1) + 6.8}{d \log q - 2\log(n-1)},
\end{split}
\end{equation*}
which concludes the proof.

\section{Proof of Theorem~\ref{thm:Diam 1}}

We now put   $m = n-1$. Note that the set $\cP_1$ is the set of $q$ linear
polynomials $X+u$, $u \in \F_q$. For an integer $k> 2m$ and $v \in \F_{q^n}^*$ we consider 
$$
N_k(\alpha; v) = \sum_ {\substack{u_1, \ldots,u_k \in \F_q\\
(u_1+\alpha) \ldots (u_{k}+\alpha)= v}}
1 . 
$$
Clearly, if for some $k$ we have $N_k(\alpha;v) >0$ for 
every $v\in \F_{q^n}^*$ then $\Delta(\alpha) \le k$.

Using the same argument as in the proof  Theorem~\ref{thm:Diam d},
we obtain the following analogue of~\eqref{eq:Mk}
$$
\left| N_k(\alpha; v) - \frac{q^{k}} {q^n-1}\right|
\le m! (n-1)^{k-2m} q^{k/2} = 
 (n-1)! (n-1)^{k-2n+2} q^{k/2}. 
$$
Thus if for some $v \in \F_{q^n}^*$ we have  $N_k(\alpha; v)=0$ 
then
\begin{equation}
\label{eq:Prelim2}
\(\frac{q^{1/2}}{n-1}\)^k 
\le  (n-1)! (n-1)^{-2n+2} (q^n-1) .
\end{equation}
The inequality~\eqref{eq:Prelim2} together with the Stirling 
inequality~\eqref{eq:Stirl} imply that, for $n \ge 3$, 
$$
\(\frac{q^{d/2}}{n-1}\)^k 
\le \sqrt{2\pi}  (n-1)^{-n+3/2} q^n  e^{-n+1+1/12(n-1)}.
$$
Using the inequality 
$$
\log\( \sqrt{2\pi}   e^{1+1/12(n-1)}\)  = \frac{25}{24} + 
\frac{1}{2}\log\( 2\pi \) \le 2,
$$
that holds  for $n \ge 3$, we obtain
\begin{equation*}
\begin{split}
k & \le  2\frac{n\log q - (n-3/2) \log (n-1) -n + 2}{\log q - 2\log (n-1)}\\
& = 2n \(1 + \frac{\log(n-1)-1}{\log q - 2\log(n-1)}\)
+ \frac{3\log(n-1) + 2}{ \log q - 2\log(n-1)}, 
\end{split}
\end{equation*}
and the result now follows.

\section*{Acknowledgements}

This work was finished during a very enjoyable 
stay of the author at the Max Planck Institute for Mathematics,
Bonn. 
It was also supported in part by ARC grant~DP140100118.


\begin{thebibliography}{99}

\bibitem{Chung}
F. R. K. Chung, `Diameters and eigenvalues',
{\it J. Amer. Math. Soc.\/} {\bf 2} (1989),   187--196.

\bibitem{Coh} S. D. Cohen, `Polynomial factorization and an
application to  regular directed graphs',
{\it Finite Fields and Their Appl.\/}, {\bf 4}  (1998),
316--346.

\bibitem{IwKow} H. Iwaniec and E. Kowalski,
{\it Analytic number theory\/}, Amer.  Math.  Soc.,
Providence, RI, 2004.



\bibitem{Katz} N. M. Katz, `Factoring polynomials in finite fields: an application of Lang-Weil to a problem in graph theory', 
{\it Math. Ann.\/}, {\bf  286} (1990), 625--637 . 

\bibitem{LN} R. Lidl and H. Niederreiter,  {\it Finite Fields\/}, 
Cambridge Univ. Press, Cambridge, 1997.

\bibitem{LWWZ} M.~Lu, D.~Wan, L.-P.~Wang, 
X.-D.~Zhang, `Algebraic Cayley graphs over finite fields', {\it Preprint\/}, 2013,  
 (available from {\tt http://arxiv.org/abs/1303.3449}).

%
%\bibitem{Ten} G.~Tenenbaum, {\it Introduction to analytic and probabilistic 
%number theory\/}, Cambridge University Press, 1995.
%
%\bibitem{Vin} A. I. Vinogradov, `On the remainder
%  in Mertens' formula, {\it Dokl. Akad. Nauk SSSR}, {\bf 148},
%  262--263. (Russian).

\bibitem{Wan}  D.~Wan,  `Generators and irreducible polynomials over finite fields', 
{\it Math. Comp.\/}, {\bf 66\/} (1997), 1195--1212. 

\end{thebibliography}
\end{document}